\title{Cardinalities in finite monoids of $G$-equivariant functions}
\author{Ram\'on H. Ruiz-Medina\footnote{Email: harath.ruiz@academicos.udg.mx} \\
\small{Centro Universitario de Ciencias Exactas e Ingenier\'ias}, \\ 
\small{Universidad de Guadalajara, Guadalajara, M\'exico.}}
\date{}
\newtheorem{teorema}{Theorem}[]
\newtheorem{lema}[teorema]{Lemma}
\newtheorem{ejemplo}[teorema]{Example}
\newcommand{\EndG}{\mathrm{End}_{G}(X)}
\newcommand{\AutG}{\mathrm{Aut}_{G}(X)}
\newcommand{\Hbox}{\mathcal{B}_{[H]}}
\newcommand{\Kbox}{\mathcal{B}_{[K]}}
\newcommand{\StabG}{\mathrm{Stab}_{G}(X)}
\newcommand{\ConjG}{\mathrm{Conj}(G)}
\newcommand{\ConjX}{\mathrm{Conj}_{G}(X)}
\begin{document}

\maketitle

\begin{abstract}
A set with a group action is referred to as a $G$-set, and the set of functions that commute with this action forms a monoid under function composition. This paper examines the case where the $G$-set is finite, which implies that the monoid of $G$-equivariant functions is also finite. The document provides formulas for calculating the cardinality of this monoid, its group of units, and explores special cases of $G$-equivariant functions, known as fixing elementary collapsings. All of these results are expressed in terms of specific properties of the $G$-set, including the number of orbits and certain indices of the subgroups acting as stabilizers.\\
\textbf{Keywords:} Group actions, $G$-sets, $G$-equivariant function, cardinality. \\

\textbf{MSC 2020:} 20B25, 20E22, 20M20.
\end{abstract}

\section{Introduction}  

For any group $G$, a $G$-set is simply a set $X$ on which $G$ acts; that is, there exists a function $\cdot : G \times X \to X $ such that $e \cdot x = x $ for all $x \in X $ and $g \cdot (h \cdot x) = (gh) \cdot x $ for all $x \in X $, $g, h \in G $ In the context of semigroup theory, $G$-sets are also known as $G$-acts. A $G$-equivariant transformation of $X$, or a $G$-endomorphism of $X$, is a function $\tau : X \to X $ such that $\tau(g \cdot x) = g \cdot \tau(x) $for all $g \in G $,  $x \in X $. These maps are fundamental in the category of $G$-sets and find applications in various branches of mathematics such as equivariant topology, representation theory, and statistical inference.\\

The set of all $G$-equivariant transformations of $X$, which are functions that commute with the group action, forms a monoid under function composition. We denote this monoid as $\EndG$, and its group of units, consisting of all bijective $G$-equivariant transformations, as $\AutG$. These objects have been extensively studied in various contexts (see \cite{cita21}, \cite{cita22}, \cite{cita23}, \cite{cita24}). Many examples of objects with group actions and associated $G$-equivariant functions have been explored, such as cellular automata, which have been the motivation for much of this work (\cite{cita6},\cite{cita7},\cite{cita8},\cite{cita9}), continuous actions on topological spaces, independence algebras, among others. Further details can be found in (\cite{cita11}, \cite{cita12}, \cite{cita13}, \cite{cita14}, \cite{cita15}).\\

Let $\ConjG$ the set of all conjugacy classes of subgroups of $ G $, and denote them as follows: $ [H]:=\{g^{-1}Hg:\ g\in G\} $.  In the cases where there are available a numerable amount of conjugacy classes, denote $ [H_{1}], [H_{2}],...,[H_{r}],... $  to all conjugacy classes of subgroups of $ G $, ordered by their cardinality as:
$$  |H_{1}| \leq |H_{2}| \leq \dots \leq |H_{r}|\leq \dots. $$ 
We also denote a finite set with $r$-elements as $[r]=\{1,2,...,r\}$. We can define a partial order over $\ConjG$ given as:
$$H \leq K \iff \exists g\in G\ s.t.\ H\leq g^{-1}Kg. $$

Given the action of a group $ G $  on a set $ X $, we recall the $ G $-orbits and the stabilizer of elements in $ X $  as follows, for $ x \in X $ :
$$  Gx := \{ g \cdot x \mid g \in G \},\  G_{x} := \{ g \in G \mid g \cdot x = x \}. $$ 
After, based on the stabilizer, we define the following sets in $ X $. Given $ H \leq G $, let:
$$  \mathcal{B}_{H} := \{ x \in X \mid G_{x} = H \}, $$ 
$$  \Hbox := \{ x \in X \mid [G_{x}] = [H] \}, $$ 
and we extend the partial order of the conjugacy classes to theses sets as 
$$\Hbox \leq \Kbox \iff [H]\leq [K].$$

If an element $x\in X$ is in a box $\Hbox$, the whole orbits must be in the same box because of the $G$-equivariance, then we define an operator that expresses the amount of orbits in a box as $\alpha_{[H]}$.\\
 
 Note that some conjugacy classes may not be included in the stabilizer set of the action of $ G $  on $ X $, then we define the set of subgroups of $G$ that work as stabilizer for elements in $X$ as:
$$  \StabG:=\{G_{x}|\ x\in X\}. $$

However, we must note that if $ H \in \StabG $, the complete conjugacy class of $ H $, $ [H] $, is contained in $ \StabG $, because of the $G$-equivariance. If $ h \in G_{x} $  for some $ x \in X $, it holds that 
$$  (g^{-1}hg) \cdot (g^{-1}\cdot x) = g^{-1}\cdot x, \ \forall g \in G, $$ 
meaning that any conjugate of $ h $  stabilizes at least one element in $ X $. Then we denote by $\ConjX$ to the set of conjugacy classes of subgroups of $G$ in $\StabG$ as,
$$\ConjX:=\{[H]\in \ConjG|\ H\in \StabG\}.$$

The following result is well known in the theory of $ G $-equivariant functions. 
\begin{lema}\label{lema1}
Let $ G $  be a group acting on a set $ X $, given $ x,y \in X $, the following holds: 

\begin{enumerate} 
\item[i)] There exists a $ G $-equivariant function $ \tau \in \EndG $  such that $ \tau(x)=y $  if and only if $ G_{x} \leq G_{y} $.
\item[ii)] There exists a bijective $ G $-equivariant function $ \sigma \in \AutG $  such that $ \sigma(x)=y $  if and only if $ G_{x} = G_{y} $.
\end{enumerate}
\end{lema}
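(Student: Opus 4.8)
The plan is to prove each biconditional by deriving necessity directly from the equivariance identity and sufficiency from an explicit orbitwise construction. For the forward direction of (i), suppose $\tau \in \EndG$ satisfies $\tau(x)=y$ and take any $g \in G_x$. Equivariance gives $g \cdot y = g \cdot \tau(x) = \tau(g \cdot x) = \tau(x) = y$, so $g \in G_y$; hence $G_x \leq G_y$. The same computation applied to a $\sigma \in \AutG$ gives one inclusion in (ii), and applying it to the inverse $\sigma^{-1}$ — which is again $G$-equivariant, since $\sigma^{-1}(g \cdot b) = \sigma^{-1}\bigl(g \cdot \sigma(\sigma^{-1}(b))\bigr) = \sigma^{-1}\bigl(\sigma(g \cdot \sigma^{-1}(b))\bigr) = g \cdot \sigma^{-1}(b)$ — yields the reverse inclusion, so $G_x = G_y$.

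For the converse of (i), assuming $G_x \leq G_y$, I would define $\tau$ on the orbit $Gx$ by $\tau(g \cdot x) := g \cdot y$ and extend it by the identity on $X \setminus Gx$. The crucial point is well-definedness: if $g \cdot x = h \cdot x$, then $h^{-1}g \in G_x \leq G_y$, so $h^{-1}g \cdot y = y$, i.e. $g \cdot y = h \cdot y$. Since distinct orbits are disjoint, $X \setminus Gx$ is $G$-invariant and the identity is equivariant there, while on $Gx$ the identity $\tau((gh)\cdot x) = (gh)\cdot y = g\cdot(h \cdot y) = g \cdot \tau(h \cdot x)$ establishes equivariance; the resulting global $\tau$ thus lies in $\EndG$ and sends $x$ to $y$.

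For the converse of (ii), assuming $G_x = G_y$, I would reuse the formula $\sigma(g \cdot x) := g \cdot y$, which now defines a bijection from $Gx$ onto $Gy$: surjectivity is immediate, and injectivity follows because $g \cdot y = h \cdot y$ forces $h^{-1}g \in G_y = G_x$, whence $g \cdot x = h \cdot x$. If $x$ and $y$ lie in the same orbit this already yields an equivariant bijection of $Gx$ onto itself; if they lie in different (disjoint) orbits I would additionally set $\sigma(g \cdot y) := g \cdot x$ to swap $Gx$ and $Gy$, again well-defined and bijective by the equality of stabilizers, and take $\sigma$ to be the identity on all remaining orbits. In either case $\sigma$ is an equivariant bijection with $\sigma(x)=y$, so $\sigma \in \AutG$.

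The equivariance checks are routine, following from associativity of the action. The step demanding care is the well-definedness of the orbitwise formulas, which is precisely where the stabilizer hypotheses enter; in (ii) one must further confirm that the construction produces a genuine bijection rather than a mere surjection onto $Gy$. This is exactly where the equality $G_x = G_y$ (as opposed to a one-sided inclusion) is indispensable, and where the same-orbit and distinct-orbit cases must be handled separately.
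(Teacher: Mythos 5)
Your proof is correct and complete: the forward directions follow from the equivariance identity together with the (verified) equivariance of $\sigma^{-1}$, and your orbitwise constructions are precisely the maps $[x\mapsto y]$ and $(x\leftrightarrow y)$ that the paper itself introduces immediately after the lemma, with the well-definedness and bijectivity points handled exactly where the stabilizer hypotheses are needed. The paper does not include its own proof (it defers to the cited reference of Castillo-Ram\'irez and Ruiz-Medina), but your argument is the standard one implicit in the paper's subsequent definitions, so there is nothing to flag.
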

\noindent Further details on this result can be found in \cite{paper}.\\

Given  a subgroup $ H\leq G $  and a subset $ N\subseteq G$, we define the $ N $-conjugacy classes of $ H $  as:
$$  [H]_{N}:=\{n^{-1}Hn:\ n\in N\}. $$ 
It is easy to see that $ [H]_{N} \subseteq [H] $, meaning that  the elements in an $ N $-conjugacy class of $ H $ are some of the conjugate subgroups of $ H $, specifically those given by conjugating elements in $ N $. Denote the normalizer of a subgroup $ H $  simply as $ N_{G}(H)=N_{H} $.\\
We define some set for the action that will be helpful to get the desired results. 
$$\mathcal{U}(H,K):=\{[T]_{N_{H}}|\ H \leq T,\  T \sim_{G} K\}$$
$$\mathcal{U}(H):=\{[K]_{N_{H}}|\ H \leq K\}$$

Let recall the kernel of a function, a equivalence relation over $X$, that can be defined as 
$$ker(f)=\{(a,b)\in X \times X|\ f(a)=f(b)\}.$$

There exist some special $G$-equivariant functions called elementary collapsings, define as follows: a $G$-equivariant function $\tau\in \EndG$ is an elementary collapsing of type $(H,[K]_{N})$ if there exist $x,y\in X$ such that:
\begin{enumerate}
\item[i)] $Gx\neq Gy$.
\item[ii)] $G_{x}=H$
\item[iii)] $[G_{y}]_{N_{H}}=[G_{\tau(x)}]_{N_{H}}=[K]_{N_{H}}$.
\item[iv)] 
\begin{small}
$\ker(\tau)=\{(a,a): a\in X\} \cup \{(g\cdot x, g\cdot y),(g\cdot y, g\cdot x): g\in G\} \cup \{(g\cdot x, h\cdot x): h^{-1}g\in G_{y}\}$.
\end{small}
\end{enumerate}
The function $\tau$ is also called a fixing elementary collapsing if in addition to the properties mentioned above it holds that
$$Fix(\tau):=\{x\in X|\ \tau(x)=x \}=X\setminus Gx.$$

Based on the results and definition just mentioned, and given $x,y\in X$ such that $G_{x}\leq G_{y}$, we define the following non-bijective $G$-equivariant functions:
$$  [x\mapsto y](z)= \left\{  \begin{array}{cc}
    g\cdot y &  z=g\cdot x, \\
    z & \mbox{otherwise.}
\end{array} \right. $$ 

$$  (x\leftrightarrow y)(z)= \left\{  \begin{array}{cc}
    g\cdot y &  z=g\cdot x, \\
    g\cdot x &  z=g\cdot y, \\
    z & \mbox{otherwise.}
\end{array} \right. $$ 

Note that if $Gx \neq Gy$, then $[x\mapsto y]$ is neither injective nor surjective, while, if $Gx=Gy$ and $G$ is a finite group (as it will be considered in further cases), then these functions are bijective and will be denoted as $(x\mapsto y)$. Furthermore, all the fixing elementary collapsings in $\EndG$ are of the form $[x\mapsto y]$.\\

The fixing elementary collapsings are important in this monoid, because they form a generating set for the whole monoid modulo its group of units, i.e.,
$$\EndG=\langle\AutG \cup \{[x\mapsto y]|\ x,y\in X, G_{x}\leq G_{y}\}  \rangle.$$

The proof of this statement can be found in \cite{paper}.

\section{Expressions to compute the cardinality}

In this section we present expressions that allow us to compute the cardinality of several relevant sets within the monoid of $G$-equivariant functions, such as the number of types of elementary collapsings, the number of elementary collapsings, and the cardinality of the monoid itself, as well as its group of units. The proofs of the results presented in this space are primarily combinatorial exercises, which are not particularly difficult to deduce for those who are reasonably familiar with the structure of the monoid.\\

The first result, and part of the motivation of this work, has been proven in \cite{paper}, it is mentioned just as reference. 

\begin{teorema}
The number of different types of elementary collapsings is exactly 
$$\sum_{[H]\in \ConjX}{|\mathcal{U}(H)|-\kappa_{G}(X)}.$$
\end{teorema}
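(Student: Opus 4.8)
The plan is to put the set of types of elementary collapsings in explicit bijection with a concrete index set and then count that set. By the defining conditions (ii)--(iii), a type records exactly the pair consisting of the source stabilizer $H=G_{x}$ and the $N_{H}$-conjugacy class $[K]_{N_{H}}=[G_{y}]_{N_{H}}$ of the target stabilizer. Since an elementary collapsing is in particular a map of the form $[x\mapsto y]$, Lemma~\ref{lema1}(i) forces $G_{x}\le G_{y}$ and hence $H\le K$; moreover the target must be genuinely attained as a stabilizer, so that $\Kbox\neq\emptyset$, which we read as built into the enumeration $\mathcal{U}(H)$. Thus a type is precisely a pair $([H],[K]_{N_{H}})$ with $[H]\in\ConjX$ and $[K]_{N_{H}}\in\mathcal{U}(H)$. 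First I would check that this correspondence is well defined and bijective modulo the group of units: condition~(iv) pins down $\ker(\tau)$, and hence $\tau$ up to composition with elements of $\AutG$, while Lemma~\ref{lema1}(ii) guarantees that units preserve stabilizers exactly, so distinct pairs cannot be identified modulo $\AutG$.

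Granting the bijection, the gross number of candidate pairs is $\sum_{[H]\in\ConjX}|\mathcal{U}(H)|$, since for each admissible source class $[H]$ the eligible targets are enumerated exactly by $\mathcal{U}(H)$. It then remains to discard the pairs for which no choice of $x,y$ satisfies all four conditions simultaneously. The only condition that can fail is~(i), namely $Gx\ne Gy$: conditions (ii)--(iii) merely name the stabilizers and (iv) builds the map once $x,y$ are fixed. When $H<K$ properly one has $|G_{x}|=|H|<|K|=|G_{y}|$, so $x$ and $y$ can never share an orbit and~(i) holds automatically; hence every non-diagonal pair survives and is realized (using that inside a single orbit the elements with exact stabilizer $H$ form one $N_{H}$-orbit of size $[N_{H}:H]$, so each prescribed target stabilizer genuinely occurs).

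The decisive case is the diagonal pair $[K]_{N_{H}}=[H]_{N_{H}}$, i.e.\ $K=H$, which lies in every $\mathcal{U}(H)$. Realizing it demands $x,y$ with $G_{x}=G_{y}=H$ yet $Gx\ne Gy$, which is possible exactly when $\Hbox$ splits into at least two orbits, that is $\alpha_{[H]}\ge 2$; if $\alpha_{[H]}=1$ then any two such $x,y$ lie in a common orbit and~(i) fails, so the diagonal type is spurious. The pairs to be removed are therefore exactly the diagonal pairs attached to single-orbit classes, and their number is the correction term $\kappa_{G}(X)=\#\{[H]\in\ConjX:\alpha_{[H]}=1\}$; subtracting it from the gross count yields the asserted formula. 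I expect the main obstacle to lie not in this bookkeeping but in rigorously establishing the bijection modulo units --- concretely, verifying that condition~(iv) always defines a legitimate $G$-equivariant function and that no two inequivalent pairs collapse to the same type after multiplication by $\AutG$.
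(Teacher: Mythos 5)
The first thing to note is that this paper contains no proof of this statement to compare against: the theorem is explicitly ``mentioned just as reference,'' its proof being deferred to the cited work \cite{paper}. So your attempt can only be judged on its own merits, and in substance it is correct, and almost certainly parallels the deferred proof: a type is a pair $([H],[K]_{N_{H}})$ with $H\leq K$ both stabilizers; every such pair with $[H]\neq[K]$ is realized by a map $[x\mapsto y]$; and the diagonal pair $(H,[H]_{N_{H}})$ is realized precisely when $\alpha_{[H]}\geq 2$, so the defect is the number of classes $[H]\in\ConjX$ whose box is a single orbit. In doing this you correctly reconstructed two things the paper never supplies: $\kappa_{G}(X)$ is used but nowhere defined (your reading $\kappa_{G}(X)=\#\{[H]\in\ConjX:\ \alpha_{[H]}=1\}$ is the right one), and the displayed definition of $\mathcal{U}(H)$ does not restrict $K$ to lie in $\StabG$, even though the formula is false without that restriction; your decision to read the restriction into $\mathcal{U}(H)$ is also the right one.

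Two caveats on your execution. First, the ``bijection modulo the group of units'' framing is an unnecessary detour, and it is where your write-up is weakest: a type is not an $\AutG$-class of collapsings (collapsings with different kernels, built from different orbit pairs, share a type), and counting types requires no classification of $\tau$ up to units. What is actually needed is (a) that every elementary collapsing determines a pair of the stated form, with the second coordinate well defined once a representative $H$ of $[H]$ is fixed --- this follows because condition (iv) ties any admissible $(x,y)$ to the kernel, forcing $y'=n\cdot y$ whenever $x'=n\cdot x$ with $n\in N_{H}$, so $[G_{y'}]_{N_{H}}=[G_{y}]_{N_{H}}$ --- and (b) realizability of pairs, which you did establish. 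The item you flag as the ``main obstacle,'' namely that condition (iv) defines a legitimate equivariant function, is in fact immediate: the realizing map is exactly $[x\mapsto y]$ from the introduction, which is $G$-equivariant whenever $G_{x}\leq G_{y}$ by Lemma~\ref{lema1}, and whose kernel is precisely the set in (iv). Second, your argument that $H\lneq K$ forbids $Gx=Gy$ via $|H|<|K|$ tacitly assumes $G$ is finite; the paper assumes only that $X$ is finite. The robust version uses indices: stabilizers of points in a common orbit are conjugate, hence of equal finite index, while $H\lneq K$ forces $[G:H]>[G:K]$. These are repairable blemishes, not gaps in the counting argument itself.
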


\begin{ejemplo}\label{ejemplouno}
Consider $\mathbb{Z}_{2}=\{\overline 0, \overline 1\}$ and the following $\mathbb{Z}_{2}$-set. \\
\begin{figure}[ht]\label{figurauno}
\centering
\includegraphics[width=2.5in]{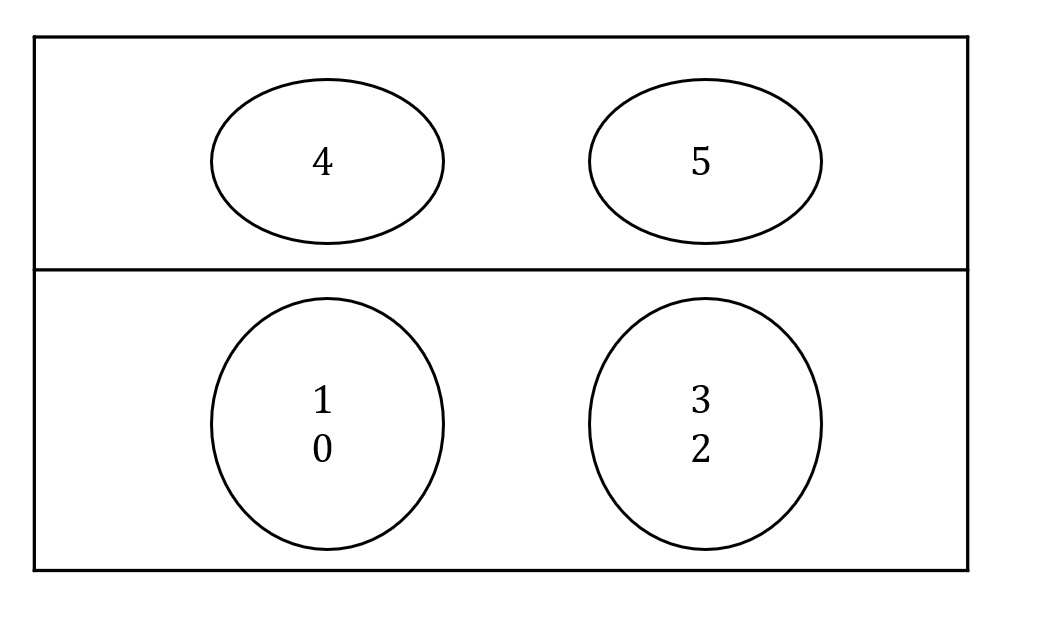}
\caption{A $\mathbb{Z}_{2}-set$.}
\end{figure}
From the figure \ref{figurauno} we can infer that
$$\begin{array}{ccc}
\overline 1\cdot 1=0,&\overline 1\cdot 2=3,& g\cdot 4=4,\ \forall g\in \mathbb{Z}_{2} \\
\overline 1\cdot 0=1,&\overline 1\cdot 3=2,& g\cdot 5=5,\ \forall g\in \mathbb{Z}_{2}  
\end{array}$$
In order to follow the properties of the $G$-equivariant functions it is enought to know where an element in an orbit goes to know where any other element in the same orbit is mapped to. This means that if we know where a functions $\tau \in \EndG$ maps the element $1$ it is also known where it maps the element $0$. For example, if it holds that $\tau(1)=2$, then it must be satisfied that $\tau(0)=3$.  
To know the total amount of $G$-equivariant functions it is needed to know where an element in each orbit can be mapped. For instance, the number $1$ can be mapped to every element in $X$, but the number $4$ can only be mapped to itself and to the number $5$.  \\
The amount of options for the number $0$, $2$ and $3$ is the same, by because of the $G$-equivariance, the functions that maps the number $1$ also map the number $0$, and they're considered the same functions, so we only have to multiply the options for the number $1$, the number $2$, the number $4$ and the number $5$. Then: 
$$|\EndG|=(6)(6)(2)(2)=144.$$

\end{ejemplo}

Hereafter, we list some basic properties of the action and the $G$-equivariant functions that will be helpful to accomplish the objectives of this paper, they can be found in any document as \cite{paper}, \cite{Howie} and \cite{rotman}. 

\begin{lema}
Given the action of a group $G$ over a finite set $X$, and an element $x\in X$ such that $G_{x}=H$, then it holds that

\begin{enumerate}
\item $$G_{g\cdot x}=g^{-1} G_{x}g=G_{x} \iff g\in N_{G}(G_{x}).$$
\item $$n_{1}\cdot x= n_{2}\cdot x \iff n_{1}H=n_{2}H.$$
\item $$|\mathcal{B}_{G_{x}}\cap Gx|=[N_{G}(G_{x}):G_{x}].$$

\item Given subgroups $H,K$, the cardinality of a $N_{H}$-conjugacy class is invariant over conjugations, i.e. 
$$|[K]_{N_{H}}|=|[gKg^{-1}]_{N_{H}}|,\ \forall g\in G.$$
\end{enumerate}
\end{lema}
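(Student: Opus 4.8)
The plan is to treat all four parts as consequences of elementary orbit--stabilizer theory, proving them in the order stated since Parts 1 and 2 feed directly into Part 3. For Part 1, I would compute the stabilizer of a translate straight from the axioms: if $h\cdot(g\cdot x)=g\cdot x$, then conjugating shows $h$ lies in a conjugate of $G_{x}$, so that $G_{g\cdot x}=g^{-1}G_{x}g$ as claimed; the equivalence with $g\in N_{G}(G_{x})$ is then nothing more than the definition of the normalizer, since $g^{-1}G_{x}g=G_{x}$ is precisely the normalizing condition. I expect this to be routine.

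For Part 2, with $H=G_{x}$, I would use the coset--stabilizer correspondence: $n_{1}\cdot x=n_{2}\cdot x$ if and only if $(n_{2}^{-1}n_{1})\cdot x=x$, i.e. $n_{2}^{-1}n_{1}\in H$, which is equivalent to $n_{1}H=n_{2}H$. Part 3 then falls out by combining the first two parts: by Part 1 an element $g\cdot x$ of the orbit $Gx$ lies in $\mathcal{B}_{G_{x}}$ (has stabilizer exactly $G_{x}=H$) precisely when $g\in N_{H}$, so that $\mathcal{B}_{G_{x}}\cap Gx=\{n\cdot x: n\in N_{H}\}$; applying Part 2 restricted to $N_{H}$, these elements are in bijection with the cosets of $H$ in $N_{H}$, which yields $|\mathcal{B}_{G_{x}}\cap Gx|=[N_{H}:H]$.

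Part 4 is the step I expect to be the real obstacle, and the one whose formulation must be read carefully. The plan is to recognize $[K]_{N_{H}}$ as the orbit of $K$ under the conjugation action of $N_{H}$ on the subgroups of $G$, so that orbit--stabilizer gives $|[K]_{N_{H}}|=[N_{H}:N_{H}\cap N_{G}(K)]$. To obtain the invariance I would invoke the inner automorphism $c_{g}\colon t\mapsto gtg^{-1}$ of $G$: it sends $H$ to $gHg^{-1}$, hence $N_{H}=N_{G}(H)$ to $N_{G}(gHg^{-1})=gN_{H}g^{-1}$, and $K$ to $gKg^{-1}$, thereby carrying the $N_{H}$-conjugacy class of $K$ bijectively onto the $(gN_{H}g^{-1})$-conjugacy class of $gKg^{-1}$ and giving equal cardinalities. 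The crux is that the conjugating element must act on $N_{H}$ together with $K$: the robust invariance is under simultaneous conjugation of the pair $(H,K)$, which is exactly what leaves the index $[N_{H}:N_{H}\cap N_{G}(K)]$ unchanged. Holding $N_{H}$ fixed while conjugating only $K$ changes $N_{H}\cap N_{G}(K)$ in general, so before running the computation I would pin down the intended reading---either the simultaneous-conjugation form, or the case $g\in N_{H}$, in which $gKg^{-1}$ already lies in $[K]_{N_{H}}$ and the two classes literally coincide.
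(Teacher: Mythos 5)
Your treatment of Parts 1--3 is correct. Note that the paper itself offers no proof of this lemma (it is stated as background, with citations to its references), so there is no internal argument to compare against; your coset/orbit--stabilizer proofs are the standard ones and your derivation of Part 3 by combining Parts 1 and 2 is exactly right. One small correction to Part 1: for a left action the computation $h\cdot(g\cdot x)=g\cdot x\iff (g^{-1}hg)\cdot x=x$ yields $G_{g\cdot x}=gG_{x}g^{-1}$, not $g^{-1}G_{x}g$; this misplaced conjugation is in the paper's statement and you inherited it, but it is harmless, since $gG_{x}g^{-1}=G_{x}\iff g^{-1}G_{x}g=G_{x}\iff g\in N_{G}(G_{x})$.

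Your hesitation about Part 4 is vindicated: as literally written, the claim is false, so proving the simultaneous-conjugation version and flagging the intended reading was the right move. A concrete counterexample settling the point you left open: take $G=S_{3}$, $H=\langle(1\,2)\rangle$, $K=\langle(1\,3)\rangle$ and $g=(2\,3)$, so that $gKg^{-1}=\langle(1\,2)\rangle=H$. Subgroups of order $2$ are self-normalizing in $S_{3}$, so $N_{H}=H$, and
\begin{equation*}
[K]_{N_{H}}=\{\langle(1\,3)\rangle,\langle(2\,3)\rangle\},\qquad
[gKg^{-1}]_{N_{H}}=[H]_{N_{H}}=\{H\},
\end{equation*}
with cardinalities $2\neq 1$; this is precisely your observation that $N_{H}\cap N_{G}(K)$ moves when only $K$ is conjugated. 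Your inner-automorphism argument does prove the true statement $|[K]_{N_{H}}|=|[gKg^{-1}]_{N_{gHg^{-1}}}|$, and for $g\in N_{H}$ the two classes literally coincide, as you say. One warning beyond the lemma: the reading the paper actually relies on later --- in the proof of the formula for $|\EndG|$ it multiplies $|[K]_{N_{H}}|$ by $|\mathcal{U}(H,K)|$, which presumes that all classes in $\mathcal{U}(H,K)$, i.e.\ classes of $G$-conjugates of $K$ containing $H$, have a common size --- is also false. In $G=S_{4}$ with $H=\langle(1\,2)(3\,4)\rangle$, the normalizer $N_{H}$ is the Sylow $2$-subgroup $P_{1}$ containing $H$; all three Sylow $2$-subgroups $P_{1},P_{2},P_{3}$ contain $H$ (they all contain the Klein four-group) and are conjugate, yet $[P_{1}]_{N_{H}}=\{P_{1}\}$ has size $1$ while $[P_{2}]_{N_{H}}=\{P_{2},P_{3}\}$ has size $2$. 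So pinning down the reading is not pedantry: the version of Part 4 that the paper's main counting argument needs is exactly the one that fails.
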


We propose results that allow us to determine the cardinal of the monoid based on its structure.

\begin{teorema}
Let be $G$ a group that acts over a finite set $X$, then it holds that:
$$|\EndG|=\prod_{[H]\in \ConjX}{\left(\sum_{\substack{[K]\in \ConjX \\ [H] \leq [K]}} \alpha_{[K]}[N_{G}(K):K] |[K]_{N_{H}}|  |\mathcal{U}(H,K)| \right)^{\alpha_{[H]}}}.$$
\end{teorema}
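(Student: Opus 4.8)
The plan is to count $G$-equivariant functions by exploiting the fact, guaranteed by Lemma~\ref{lema1}, that a map $\tau \in \EndG$ is completely and freely determined by its values on a single representative of each $G$-orbit, subject only to a stabilizer constraint. Concretely, I would first fix one representative per orbit and observe that prescribing $\tau(x)=y$ for a representative $x$ extends to a well-defined $G$-equivariant map on the orbit $Gx$ precisely when $G_{x}\leq G_{y}$: this is exactly part (i) of Lemma~\ref{lema1}, and the well-definedness reduces to the implication $g_{1}\cdot x=g_{2}\cdot x \Rightarrow g_{1}\cdot y=g_{2}\cdot y$. Since distinct orbits partition $X$ and the choices on different orbits are independent, this yields
$$|\EndG| = \prod_{O \text{ orbit}} \#\{\, y \in X : G_{x_{O}} \leq G_{y} \,\},$$
where $x_{O}$ is the chosen representative of $O$.

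Next I would organize the orbits by their box. For each orbit inside $\mathcal{B}_{[H]}$ I would choose the representative $x_{O}$ so that $G_{x_{O}}=H$ exactly (possible since every orbit in the box meets $\mathcal{B}_{H}$). A short bijection argument — the translation $y\mapsto g\cdot y$ carries $\{y:H\leq G_{y}\}$ onto $\{y: gHg^{-1}\leq G_{y}\}$ — shows the target count depends only on the class $[H]$, so I may denote it $N(H):=\#\{y\in X: H\leq G_{y}\}$. Because $\mathcal{B}_{[H]}$ contains exactly $\alpha_{[H]}$ orbits, the product collapses to
$$|\EndG| = \prod_{[H]\in \ConjX} N(H)^{\alpha_{[H]}},$$
and it remains only to identify $N(H)$ with the inner sum in the statement.

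To compute $N(H)$ I would stratify $X$ by the \emph{exact} stabilizer: $N(H)=\sum_{T:\,H\leq T}|\mathcal{B}_{T}|$, the sum ranging over subgroups $T$ containing $H$ that actually occur as stabilizers. Part 3 of the basic-properties Lemma gives $|\mathcal{B}_{T}\cap Gx|=[N_{G}(T):T]$ for each orbit $Gx$ in the box $\mathcal{B}_{[T]}$ (taking $x$ with $G_{x}=T$), and since that box holds $\alpha_{[T]}$ orbits I obtain $|\mathcal{B}_{T}|=\alpha_{[T]}[N_{G}(T):T]$. Regrouping the subgroups $T$ by their $G$-conjugacy class $[K]$, and noting that $\alpha_{[T]}$ and $[N_{G}(T):T]$ are constant on a conjugacy class, turns the sum into
$$N(H) = \sum_{\substack{[K]\in \ConjX \\ [H]\leq [K]}} \alpha_{[K]}\,[N_{G}(K):K]\;\#\{\, T : H\leq T,\ T\sim_{G} K \,\}.$$

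The crux, and the step I expect to be the main obstacle, is the final identity $\#\{T:H\leq T,\ T\sim_{G}K\}=|[K]_{N_{H}}|\,|\mathcal{U}(H,K)|$, which requires keeping the distinction between genuine subgroups and their conjugacy classes straight. Here I would argue that both conditions "$H\leq T$" and "$T\sim_{G}K$" are invariant under conjugation by $N_{H}=N_{G}(H)$: the second is immediate, and for the first, $n\in N_{G}(H)$ gives $n^{-1}Hn=H\leq n^{-1}Tn$. Hence $\{T:H\leq T,\ T\sim_{G}K\}$ is a disjoint union of $N_{H}$-conjugacy classes, and by the definition of $\mathcal{U}(H,K)$ there are exactly $|\mathcal{U}(H,K)|$ of them. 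Finally, part 4 of the basic-properties Lemma shows that every such class shares the common size $|[K]_{N_{H}}|$, so the union has cardinality $|[K]_{N_{H}}|\,|\mathcal{U}(H,K)|$. Substituting back recovers the inner sum, and the product formula follows; the delicate bookkeeping is ensuring the $N_{H}$-conjugacy count in this last step lines up correctly with the $G$-conjugacy indexing of the boxes.
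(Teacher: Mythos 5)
Your proposal is correct and follows essentially the same route as the paper's own proof: both count, for one representative $x$ per orbit, the admissible images $\{y : G_{x}\leq G_{y}\}$ via Lemma~\ref{lema1}, and both factor that count into $\alpha_{[K]}[N_{G}(K):K]\,|[K]_{N_{H}}|\,|\mathcal{U}(H,K)|$ using parts 3 and 4 of the basic-properties lemma. Your version is simply a tighter rendition of the paper's informal argument --- in particular, stratifying by exact stabilizer to get $N(H)=\sum_{T\supseteq H}|\mathcal{B}_{T}|$ and isolating the identity $\#\{T : H\leq T,\ T\sim_{G}K\}=|[K]_{N_{H}}|\,|\mathcal{U}(H,K)|$ makes explicit the bookkeeping the paper passes over quickly.
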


\begin{proof}
By a combinatorial argument, to get this prove done, is enough to see how many options an element in $X$ has to be mapped, and multiply them all together, but because of the $G$-equivariance some elements in $X$ can be discounted. \\

Given an element $x\in X$, as $X$ is finite, in consequence, because of the orbit-stabilizer theorem, $H=G_{x}$ has to have finite index, therefore, $H$ is not contained in any of its conjugates, thus $|[G_{x}]_{N_{G_{x}}}|=1$, and also the amount of different $N_{H}$-conjugacy classes is $|\mathcal{U}(G_{x},G_{x})|=1$. In consequence, the amount of element in the same orbit $Gx$ such that $x$ can be mapped by a $G$-equivariant functions $\tau\in \EndG$ is given by $[N_{G}(G_{x}):G_{x}]$. Besides, the number of elements in the other orbits, but in the same box, such that $x$ can be mapped is the same, so we have to multiply this number times the number of $G$-orbits in the box, $\alpha_{[G_{x}]}$. This means that the amount of elements in the same box such that $x$ can be mapped satisfies the following expression:
$$\alpha_{[G_{x}]}[N_{G}(G_{x}):G_{x}] |[G_{x}]_{N_{G_{x}}}||\mathcal{U}(G_{x},G_{x})|.$$

Afterwards, for a different box $\Kbox$, the amount of elements in an intersection $\mathcal{B}_{K} \cap Gy$ such that $x$ can be mapped is given again for the index $[N_{G}(K):K]$, but in this case, $x$ can be mapped to another intersection given by the conjugation of $K$ by an element in $N_{H}$, but the amount of elements is the same, so we have to multiply it for the cardinality of the $N_{H}$-conjugacy classes in $\Kbox$, $|[K]_{N_{H}}|$. It is also possible that there exists another $N_{H}$-class in $\Kbox$ such that $x$ can be mapped, but the cardinality of this class is invariant by conjugation, so we only have to multiply it times the number of different $N_{H}$-classes in $\Kbox$, $|\mathcal{U}(H,K)|$ and finally multiply it times the amount of orbits in $\Kbox$. Then, the amount of elements in $\Kbox$ such that $x$ can be mapped is given by the expression:
$$\alpha_{[K]}[N_{G}(K):K] |[K]_{N_{H}}|  |\mathcal{U}(H,K)|.$$

Is not difficult to see that $|\mathcal{U}(H,K)|=0$ if $[H]\nleq [K]$. Then, to know the total amount of different elements in which is possible to map $x$ can be determined only summing this expression as long as $[H]\leq [K]$, and $[K] \in \ConjX$.

$$\sum_{\substack{[K]\in \ConjX \\ [H] \leq [K]}} \alpha_{[K]}[N_{G}(K):K] |[K]_{N_{H}}|  |\mathcal{U}(H,K)|.$$

For the complete box $\Hbox$, as only one element in each orbit is needed to determine the images of all the elements in the orbit, it's enought to multiply this quantity by itself the number of orbits in $\Hbox$ times. 

$$\left( \sum_{\substack{[K]\in \ConjX \\ [H] \leq [K]}} \alpha_{[K]}[N_{G}(K):K] |[K]_{N_{H}}|  |\mathcal{U}(H,K)|\right)^{\alpha_{[H]}}.$$

As this holds for every box $\Hbox$, we complete the calculations only multiplying this number of all the boxes by each other.  
\end{proof}

In the following, we compute the cardinality of the group of units of the monoid, this is given by all the $G$-equivariant functions that are also bijective. We begin by illustrating a particular case through an example, in order to gain intuition about the demonstration process.

\begin{ejemplo}
Consider again the $G$-set of the example \ref{ejemplouno}.\\
An ''automorphism'' $\sigma\in \AutG$ that maps $1$ into $2$, has to map $0$ exclusively to the number $3$, so we consider this as the same case. So the options for a $G$-equivariant bijective functions to map the number $1$ are the number $0$, the number $1$, the number $2$ and the number $3$. Once that an element is map to an orbit, no other element can be map to any other element in the same orbits, this would break the injectivity, because of the $G$-equivariance. So, once that we map the number $1$ for example to the number $0$, the number $2$ cannot be mapped to the same orbit, so the only options left are the number $2$ itself and the number $3$.  By the same reasons, the number $4$ can only be mapped to the number five or itself.  \\
Once again, because of the $G$-equivariance, we only consider one element in each orbit and multiply the amount of options available. In this particular case:
$$|\AutG|=(4)(2)(2)(1).$$
\end{ejemplo}

\begin{teorema}
Let be $G$ a group acting over a finite set $X$, then it holds that:
$$|\AutG| =  \prod_{[H]\in \ConjX} \left( \alpha_{[H]}\right)![N_{G}(H):H]^{\alpha_{[H]}}.$$
\end{teorema}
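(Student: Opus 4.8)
The plan is to mirror the combinatorial bookkeeping of the previous theorem, but now restricted to the far more rigid class of bijections. The starting observation is that membership in $\AutG$ forces stabilizers to be preserved \emph{exactly}, not merely up to conjugacy: for $\sigma\in\AutG$ and any $x\in X$, taking $y=\sigma(x)$ in the ``only if'' direction of Lemma \ref{lema1}(ii) gives $G_{\sigma(x)}=G_{x}$. Consequently each $\sigma$ carries $\mathcal{B}_{H}$ into itself for every $H$, hence preserves each box $\Hbox$, and inside a box it can only send an orbit to another orbit. Since distinct boxes are disjoint and a $G$-equivariant bijection is determined independently on each box, the cardinality $|\AutG|$ factors as a product over $[H]\in\ConjX$ of the number of ways to realize the $\Hbox$-part of an automorphism; it therefore suffices to count these box-local automorphisms and multiply.

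First I would fix, in every one of the $\alpha_{[H]}$ orbits contained in $\Hbox$, a representative whose stabilizer is exactly $H$. This is legitimate because each such orbit meets $\mathcal{B}_{H}$, and in fact the third item of the previous lemma tells us that the number of points of a given orbit lying in $\mathcal{B}_{H}$ equals $[N_{G}(H):H]$. I then parametrize the restriction of an automorphism to $\Hbox$ by two independent pieces of data: a permutation $\pi$ of the $\alpha_{[H]}$ orbits, describing which orbit is sent to which, and, for each orbit, a choice of image for its chosen representative among the points of the target orbit that have stabilizer exactly $H$. The permutations contribute a factor of $\left(\alpha_{[H]}\right)!$, while the image choices contribute $[N_{G}(H):H]$ per orbit, i.e. $[N_{G}(H):H]^{\alpha_{[H]}}$ in total, matching the claimed local factor.

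The step I expect to be the crux is proving that this parametrization is a genuine bijection onto the $\Hbox$-local automorphisms. In the forward direction I must check that a choice of $(\pi,\text{image points})$ does extend, via $\sigma(g\cdot x_{i})=g\cdot y_{i}$, to a well-defined $G$-equivariant bijection: well-definedness holds precisely because the chosen representative $x_{i}$ and its image $y_{i}$ have equal stabilizer $H$, so $g\cdot x_{i}=g'\cdot x_{i}\iff g^{-1}g'\in H\iff g\cdot y_{i}=g'\cdot y_{i}$, which simultaneously gives injectivity on each orbit, while surjectivity onto the target orbit is automatic. In the reverse direction I must verify exhaustiveness and injectivity of the parametrization: any $\sigma\in\AutG$ restricted to $\Hbox$ induces a permutation of the orbits and sends each chosen representative to a definite point with stabilizer $H$, so it is recovered by exactly one data pair, and two distinct data pairs produce different values on some representative and hence different automorphisms. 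These verifications are routine given Lemma \ref{lema1}(ii) and the stabilizer count, but they are where the genuine content lives.

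Assembling the box-local counts over all $[H]\in\ConjX$ through the independence noted in the first paragraph then yields
$$|\AutG|=\prod_{[H]\in\ConjX}\left(\alpha_{[H]}\right)!\,[N_{G}(H):H]^{\alpha_{[H]}},$$
completing the argument.
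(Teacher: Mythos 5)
Your proof is correct and follows essentially the same route as the paper: both decompose $\AutG$ as a product over boxes using Lemma \ref{lema1}(ii) to force exact equality of stabilizers, and both obtain the local factor $\left(\alpha_{[H]}\right)!\,[N_{G}(H):H]^{\alpha_{[H]}}$ from a permutation of the orbits in $\Hbox$ together with $[N_{G}(H):H]$ choices of image point per orbit. Your explicit parametrization-is-a-bijection step is just a more careful packaging of the paper's sequential count $\alpha_{[H]}[N_{G}(H):H]\cdot(\alpha_{[H]}-1)[N_{G}(H):H]\cdots$, so no substantive difference remains.
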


\begin{proof}
As an element $x\in X$ can only by mapped by a bijective $G$-equivariant function to an element in the same box, its enough to count the options in the same box. Consider $G_{x}=H$, because of the finiteness of $X$, $H$ is a subgroup of finite index, in consequence, it is not contained in any of its conjugates, and therefore, there is no $G$-equivariant function (neither regular nor bijective) that can map $x$ to an element with a different stabilizer. It is already known that the amount of elements in every orbit such that x can be mapped is given by the index $[N_{G}(H):H]$, so we multiply this quantity times the amount of orbits in $\Hbox$,
$$\alpha_{[H]}[N_{G}(H):H].$$
Once that we know where x , and all its orbit, is mapped, the next element to consider cannot be mapped to the same orbit, so we lose $[N_{G}(H):H]$ many elements as option. So for the element representing the next orbit the amount of options is 
$$\alpha_{[H]}[N_{G}(H):H]-[N_{G}(H):H]=(\alpha_{[H]}-1)[N_{G}(H):H].$$
For the third orbit the amount of options is 
$$(\alpha_{[H]}-2)[N_{G}(H):H].$$
We continue this construction until the last orbit in $\Hbox$, and the amount of possible elements to be mapped is 
$$(\alpha_{[H]}-(\alpha_{[H]}-1))[N_{G}(H):H].$$
Once again, by a combinatorial argument, we multiply all the available options for each element (and in consequence each orbit) in $\Hbox$:
\begin{small}
$$\alpha_{[H]}[N_{G}(H):H](\alpha_{[H]}-1)[N_{G}(H):H]...(\alpha_{[H]}-(\alpha_{[H]}-1))[N_{G}(H):H]=(\alpha_{[H]})[N_{G}(H):H]^{\alpha_{[H]}}.$$
\end{small}
This express only the options for one $\Hbox$, so we complete the calculations multiplying all the options in every $\Hbox$ in the action. 
\end{proof}

The elementary collapsings are important elements in $\EndG$, because they give us a generating set for $\EndG$ modulo $\AutG$. Some basic properties of the collapsings and the amount of types has been already computed in \cite{paper}, we propose an expression that counts all the different elementary collapsings without regard of the type of the collapsings. 

\begin{ejemplo}
For the $G$-set in the example \ref{ejemplouno}, there only exist 10 fixing elementary collapsings.

$$\begin{array}{|cc|}
\hline
[0\mapsto 2] & [2 \mapsto 0]\\

[0\mapsto 3] & [2 \mapsto 1]\\

[0\mapsto 4] & [2 \mapsto 4]\\

[0\mapsto 5] & [2 \mapsto 5]\\

[4\mapsto 5] & [5 \mapsto 4]\\ 
\hline
\end{array}$$

It is important to point out that $[0\mapsto 2]=[1\mapsto 3]$, because of the $G$-equivariance. 
\end{ejemplo}

\begin{teorema}
The number of different fixing elementary collapsings of any type is given by the following expression:
$$\sum_{[H]\in \ConjX}{\alpha_{[H]}\left( \sum_{\substack{[K]\in \ConjX \\ [H] \lneq [K]} }{ \alpha_{[K]}[N_{G}(K):K]|[K]_{N_{H}}| |\mathcal{U}(H,K)|+(\alpha_{[H]}-1)[N_{G}(H):H]} \right)}.$$
\end{teorema}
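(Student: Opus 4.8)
The plan is to count the fixing elementary collapsings directly, using the two facts recorded above: every such map has the form $[x\mapsto y]$ with $G_{x}\leq G_{y}$ and $Gx\neq Gy$, and $G$-equivariance gives the identification $[x\mapsto y]=[g\cdot x\mapsto g\cdot y]$ already observed in the examples. The computation runs parallel to that of Theorem 2: I fix a box $\Hbox$, fix one representative $x$ from each of its $\alpha_{[H]}$ orbits, and count the admissible targets $y$. The only differences from the cardinality count are that $y$ is now forbidden to lie in the orbit $Gx$ itself, and that the admissible targets split into those in strictly larger boxes and those in $\Hbox$ itself.

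First I would settle the combinatorial bookkeeping, which ensures each collapsing is counted exactly once. Since $\Fix([x\mapsto y])=X\setminus Gx$, the source orbit $Gx$ is an invariant of the map, so collapsings with distinct source orbits are automatically distinct and may be counted orbit by orbit. Within a fixed source orbit $Gx$, every collapsing can be rewritten on the chosen representative $x$: if $x'=g\cdot x$ then $[x'\mapsto y']=[x\mapsto g^{-1}\cdot y']$. Moreover, for the fixed $x$, evaluating at $x$ shows that distinct targets $y$ yield distinct maps $[x\mapsto y]$. Hence the number of fixing collapsings with source orbit $Gx$ equals the number of admissible targets $y\in X$ with $G_{x}\leq G_{y}$ and $Gy\neq Gx$.

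Next I would count those targets, sorting by the box containing $y$. For a strictly larger box, $[H]\lneq[K]$, the condition $Gy\neq Gx$ is automatic, and the number of $y$ in $\Kbox$ with $G_{x}\leq G_{y}$ is exactly the quantity $\alpha_{[K]}[N_{G}(K):K]\,|[K]_{N_{H}}|\,|\mathcal{U}(H,K)|$ justified in the proof of Theorem 2. For targets in the same box $\Hbox$, the finite-index argument from that proof applies: if $G_{y}\sim_{G}H$ and $H=G_{x}\leq G_{y}$, then $G_{y}$ and $H$ are conjugate, hence of equal finite order, with one containing the other, forcing $G_{y}=H$. By the third property of the preceding lemma each orbit of $\Hbox$ contains exactly $[N_{G}(H):H]$ elements whose stabilizer equals $H$; excluding the orbit $Gx$ leaves $\alpha_{[H]}-1$ eligible orbits and contributes $(\alpha_{[H]}-1)[N_{G}(H):H]$ targets. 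Adding the cross-box sum and this single within-box term gives the parenthesized expression; multiplying by the $\alpha_{[H]}$ choices of source orbit in $\Hbox$ supplies the factor $\alpha_{[H]}$, and summing over all $[H]\in\ConjX$ produces the stated formula.

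The step I expect to be the main obstacle is the exact-once counting of the second paragraph, namely confirming that the correspondence between fixing collapsings and pairs (source orbit, admissible target) is a genuine bijection: one must verify both that for a fixed representative no two distinct targets collapse to the same map, and that across the choice of orbit representative no collapsing is missed or double-counted, which is precisely what the invariance of $Gx$ under $\Fix$ and the rewriting $[x'\mapsto y']=[x\mapsto g^{-1}\cdot y']$ guarantee. Once this is secured, the remaining index and $N_{H}$-conjugacy-class counts are identical to those already carried out for Theorem 2.
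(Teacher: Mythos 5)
Your proof is correct and takes essentially the same approach as the paper's: for each source orbit in a box $\Hbox$ you count admissible targets, splitting them into strictly larger boxes (reusing the count $\alpha_{[K]}[N_{G}(K):K]\,|[K]_{N_{H}}|\,|\mathcal{U}(H,K)|$ from the cardinality theorem) and the same box (giving $(\alpha_{[H]}-1)[N_{G}(H):H]$), then multiply by $\alpha_{[H]}$ and sum over boxes. In fact your second paragraph, justifying the exact-once correspondence via the invariance of $\Fix([x\mapsto y])=X\setminus Gx$ and the rewriting $[g\cdot x\mapsto y']=[x\mapsto g^{-1}\cdot y']$, is more careful than the paper's own bookkeeping.
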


\begin{proof}
Once more, because of the $G$-equivariance, it's enough to count the options an element in every orbit has. An we already know that the options for an element in an orbit in the box $\Hbox$ is
$$\alpha_{[H]}[N_{G}(H):H]|[H]_{N_{H}}||\mathcal{U}(H,K)|.$$
But a fixing elementary collapsing cannot map an orbit to itself, because this would result into a bijective function, and the elementary collapsings are considered non-bijective $G$-equivariant functions. So we discard one orbit in $\Hbox$.
$$(\alpha_{[H]}-1)[N_{G}(H):H]|[H]_{N_{H}}||\mathcal{U}(H,H)|.$$

The difference lies in the fact that we're not constructing functions that maps more elements, this functions only map the elements in one orbit, so there is no need to multiply. So we sum all the options for an element in the box $\Hbox$ through the classes $[K]$ such that $[H] \leq [K]$, except in the same box $\Hbox$, because of the discarted orbit, 
$$\sum_{\substack{[K]\in \ConjX \\ [H] \lneq [K]} }{ \alpha_{[K]}[N_{G}(K):K]|[K]_{N_{H}}| |\mathcal{U}(H,K)|+(\alpha_{[H]}-1)[N_{G}(H):H]}.$$

For each orbit in $\Hbox$, there is the same amount of options, so we multiply it times the amount of orbits,   
$$\alpha_{[H]}\left( \sum_{\substack{[K]\in \ConjX \\ [H] \lneq [K]} }{ \alpha_{[K]}[N_{G}(K):K]|[K]_{N_{H}}| |\mathcal{U}(H,K)|+(\alpha_{[H]}-1)[N_{G}(H):H]} \right).$$
But this happens in all the boxes, so we sum this quantities while $[H]\in \ConjG$, and we get the result. 
\end{proof}


\begin{thebibliography}{XXX}






\bibitem{cita11} Araújo, J., Mitchell, J.D.: Relative ranks in the monoid of endomorphisms of an independence algebra. Monatsh. Math. 151(1), 1–10 (2007).

\bibitem{cita21}  Bulman-Fleming, S.: Regularity and products of idempotents in endomorphism monoids of projective acts. Mathematika 42(2), 354–367 (1995).

\bibitem{cita22} Bulman-Fleming, S., Foutain, J.: Products of idempotent endomorphisms of free acts of infinite rank. Monatsh. Math. 124(1), 1–16 (1997).

\bibitem{cita6} Castillo-Ramírez, A., Sánchez-Álvarez, M., The number of configurations in the full shift with a given least period. Bull. Iranian Math. Soc. (2021) 1–10.


\bibitem{paper} Castillo-Ramírez, A., Ruiz-Medina, R. H. (2023): The relative rank of the endomorphism monoid of a finite G-set, 
Semigroup Forum (106), Springer Science and Business Media LLC, 51-66. doi.org/10.1007

\bibitem{cita7} Castillo Ramírez A., Gadouleau M.: Ranks of finite semigroups of one-dimensional cellular automata. Semigroup Forum 93, no. 2, 347-362 (2016).

\bibitem{cita8} Casillo Ramírez A., Gadouleau M.: On finite monoids of cellular automata. In: Cook, M., Neary, T. (eds.) Cellular automata and discrete complex systems. LNCS 9664,90 - 104, Springer International Publishing (2016).

\bibitem{cita9} Castillo Ramírez A., Gadouleau M.: Cellular automata and finite groups. Nat. Comput., First Online (2017).

\bibitem{cita12} Dandan, Y., Dolinka, I., Gould, V.: Free idempotent generated semigroups and endomorphism monoids of free G-acts. J. Algebra 429, 133–176 (2015).



\bibitem{cita23} Fleischer, V., Knauer, U.: Endomorphism monoids of acts are wreath products of monoids with small categories. In: Jürgensen, H., Lallement, G., Weinert, H.J. (eds.) Semigroups Theory and Applications: Lecture Notes in Mathematics, vol. 1320, pp. 84–96. Springer, Berlin (1988).

\bibitem{cita13} Gould, V.: Independence algebras. Algebra Universalis 33, 294–318 (1995).

\bibitem{cita14} Gould, V.: Independence algebras, basis algebras and semigroups of quotients. Proc. Edinburgh Math. Soc. 53(3), 697–729 (2010).


\bibitem{cita15} Gray, R.: Idempotent rank in endomorphism monoids of finite independence algebras. Proc. R. Soc. Edinburgh. Ser. A 137(2), 303–331 (2007).

\bibitem{Howie}  Howie, J. M., Schein, B. Fundamentals of Semigroup Theory.  Oxford University Press, USA, 1995.
\bibitem{howie} Howie, J.M., Ruškuc, N., Higgins, P.M.: On relative ranks of full transformation semigroups. Commun. Algebra 26(3), 733–748 (1998).
\bibitem{cita24} Knauer, U., Mikhalev, A.V.: Endomorphism monoids of acts over monoids. Semigroup Forum 6(1), 50–58 (1973).
\bibitem{rotman} Rotman, Joseph J. , An Introduction to the Theory of Groups, Graduate Texts in Mathematics, Volumen 148, Springer Science $\&$ Business Media, 2012.








\end{thebibliography}
\end{document}